%
%
%
%
\documentclass{amsart}

\newtheorem{theorem}{Theorem}[section]
\newtheorem{lemma}[theorem]{Lemma}

\theoremstyle{definition}

\theoremstyle{remark}
\newtheorem{remark}[theorem]{Remark}

\numberwithin{equation}{section}



\begin{document}

\title{open  manifolds with asymptotically nonnegative Ricci curvature and large volume growth}

\author{Yuntao Zhang}
\address{ School of Mathematics and Statistics, Jiangsu Normal University, Xuzhou, 221116,
 P.R.China}
\email{yuntaozhang@jsnu.edu.cn}
\thanks{Supported  by   PAPD of Jiangsu Higher Education Institutions.}

\subjclass[2010]{Primary 53C20; Secondary 53C21}



\keywords{Ricci curvature, finite topological type, volume rowth}

\begin{abstract}
 In this paper, we study the topology of complete noncompact Riemannian manifolds with asymptotically nonnegative Ricci curvature and large volume growth. We prove that they have  finite topological types  under some    curvature decay and volume growth conditions. We also generize it to the manifolds with $k$-th asymptotically nonnegative Ricci curvature
by using  extensions of Abresch-Gromoll's excess function estimate.\end{abstract}
\maketitle

\section{Introduction}
A complete noncompact Riemannian manifold is said to have an asymptotically nonnegative Ricci curvature
if there exists a base point $p$,  and a   positive nonincreasing function $\lambda$ such that $\int_{0}^{+\infty}s\lambda(s)ds  < +\infty$, and the Ricci curvature of $M$ at any point $x$ satisfies
$$ Ric(x)\ge -(n-1)\lambda(d_p(x)),$$
 where $d_{p}$ is the distance to $p$.
Abresch and Gromoll \cite{1} was the first one to study this class, and they proved that such manifolds have finite topological
type if the sectional curvatures are uniformly bounded and the diameter  growth have order $o(s^{1/n})$ with respect to the base point
$p$. Recall that a manifold is said to have finite topological type if there exists a compact domain $\Omega$ with boundary such that $M\setminus\Omega$ is homeomorphic to $\partial\Omega\times[0,\infty ]$. In order to complete their theorems, Abresch and Gromoll established important  excess function estimates, which are also used by Hu, Xu \cite{5} and by Mahaman \cite{7} to prove some topological rigidity results for manifolds with asymptotically nonnegative Ricci curvature.  They are also  used as important tools for many geometers to study manifolds with  nonnegative Ricci curvature, see \cite{9}, \cite{10}, \cite{11}, \cite{13},\cite{14}, etc.

Let $B(x,r)$ denote the geodesic ball of radius $r$ and center $x$ in $M$ and $B(\overline{x},r)$ denote the similar metric ball in the simply connected noncompact complete manifold with sectional curvature $-\lambda (d_{\overline{p}}(\overline{x}))$ at the point $\overline{x}$, where $d_{\overline{p}}(\overline{x})= d(\overline{p},\overline{x})$ is the distance from $\overline{p}$ to $\overline{x}$.
From the  volume comparison theorem, which was proved by Zhu \cite{15} for the base point and by Mahaman \cite{6} for any point, we know  that the function $r\mapsto \frac{volB(x,r)}{volB(\overline{x},r)}$ is monotone decreasing. Define $$\alpha_{x}\dot= \lim_{r\rightarrow +\infty}\frac{volB(x,r)}{volB(\overline{x},r)}\; \textrm{ and } \alpha_{M}\dot=\inf_{x\in M}\alpha_{x}.$$ We say $M$ is large volume growth if $\alpha_M > 0$.

For any $r>0$, let 
$$k_{x}(r)= \inf_{M\setminus B(x,r)}K,$$
 where $K$ is the sectional curvature of $M$, and the infimum is taken over all the sections at all points on $M\setminus B(x,r)$. It is easy to see that $k_{x}(r)$ is a monotone function of $r$.

For a complete open Riemannian manifold  with  nonnegative Ricci curvature and large volume growth  $\alpha_M > 0$, assume that $k_x(r)\ge -\frac{C}{(1+r)^\alpha}$ for some $x\in M$ and all $r$, where 
$C>0$ and $\alpha\in[0,2]$ are constants. Xia \cite{13} proved that $M$ has finite topological type if there is a constant
$\epsilon=\epsilon(n, C, \alpha)>0$, such that
$$\limsup_{r\to +\infty }\left\{ \left(\frac{vol B(x, r)}{\omega_{n}r^n}-\alpha_{M}\right)r^{(n-2+\frac{1}{n})(1-\frac{\alpha}{2})}
\right\}
\leq {\epsilon}\alpha_{M}.$$

The main purpose of this note is to generize the above result to the manifolds with  asymptotically nonnegative Ricci curvature. We have the following 
\begin{theorem}
Let $M$ be an $n$-dimensional $(n\ge 3)$ complete noncompact Riemannian manifold with
\begin{equation}
Ric(x)\ge -(n-1)\lambda(d_{p}(x))   \textrm{ and }  K(x)\ge -\frac{C}{d_{p}(x)^\alpha},
\end{equation}
where $C(\lambda)=\int_{0}^{+\infty}s\lambda(s)ds < +\infty$ and $C>0$, $0\le \alpha\le 2$.
If $\alpha_p >0$, then there exists a constant 
$\epsilon=\epsilon(n, \lambda, C, \alpha)>0$, such that  $M$ has finite topological type, provided that
\begin{equation}
\limsup_{r\to +\infty }\left\{ \left(\frac{vol B(x, r)}{vol B(\bar{x},r)}-\alpha_p\right)r^{(n-2+\frac{1}{n})(1-\frac{\alpha}{2})}
\right\}
\leq {\epsilon}\alpha_p.
\end{equation}
\end{theorem}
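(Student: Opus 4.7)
My plan is the standard critical-point approach of Grove--Shiohama: it suffices to produce $R_0>0$ beyond which the distance function $d_p$ has no critical points, since the isotopy lemma will then identify $M\setminus B(p,R_0)$ with $\partial B(p,R_0)\times[0,\infty)$ and give finite topological type. I will argue by contradiction: assume there is a sequence of critical points $q_j$ of $d_p$ with $r_j:=d(p,q_j)\to\infty$, and aim to derive incompatible upper and lower bounds on the Abresch--Gromoll excess at $q_j$.

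\textbf{Lower bound on the excess.} Choose a minimal geodesic $\gamma_j^+$ of length $s_j=r_j$ starting at $q_j$, with endpoint $q_j^+$. Since $q_j$ is critical, there exists a minimal geodesic $\gamma_j^-$ from $q_j$ to $p$ with $\angle(\gamma_j^+(0),\gamma_j^-(0))\le\pi/2$. Applying Toponogov comparison to the triangle $(p,q_j,q_j^+)$ and using the pointwise sectional curvature bound $K\ge -C/d_p^\alpha$, whose effective model on the relevant region is hyperbolic with small curvature when $\alpha>0$ and $r_j$ is large, I obtain an upper bound on the comparison side and hence a positive lower bound
$$e_j:=r_j+s_j-d(p,q_j^+)\ \ge\ \psi(r_j;C,\alpha),$$
where $\psi$ is an explicit, positive, unbounded-or-bounded-below function of $r_j$ whose rate depends on $\alpha$.

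\textbf{Upper bound via extended Abresch--Gromoll and volume comparison.} For the matching upper bound, I invoke the excess inequality of Abresch--Gromoll, extended to the asymptotically nonnegative Ricci setting along the lines of \cite{1, 5, 7}. Applied at $q_j$, which lies at height $h_j$ from a minimal geodesic $\gamma_j$ joining $p$ and $q_j^+$, this yields
$$e_j\ \le\ c(n,\lambda)\,\frac{h_j^{n/(n-1)}}{r_j^{1/(n-1)}}\,\Psi(r_j),$$
with $\Psi$ an explicit correction reflecting the curvature decay rate $\alpha$. The Zhu--Mahaman volume comparison, combined with the quantitative hypothesis (1.2), then bounds the height: the volume ratio defect
$$V_j:=\frac{\mathrm{vol}\,B(q_j,r_j)}{\mathrm{vol}\,B(\bar q_j,r_j)}-\alpha_p\ =\ O\bigl(r_j^{-(n-2+1/n)(1-\alpha/2)}\bigr)$$
translates via the pointwise monotonicity into $h_j\le c'\,r_j\,V_j^{1/n}$. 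Plugging this back into the excess estimate, the right-hand side becomes dominated by $\epsilon$ times the lower bound $\psi(r_j;C,\alpha)$, and choosing $\epsilon=\epsilon(n,\lambda,C,\alpha)$ small enough gives the desired contradiction.

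\textbf{Main obstacle.} The principal technical difficulty is twofold: (i) proving the Abresch--Gromoll excess estimate in the asymptotically nonnegative Ricci setting with the correct correction factor $\Psi(r_j)$ tracking the curvature decay exponent $\alpha$, which requires refined Laplacian comparison using the function $\lambda$; and (ii) sharply converting the volume ratio defect $V_j$ into the height bound $h_j\le c' r_j V_j^{1/n}$ via Zhu--Mahaman monotonicity. The specific exponent $(n-2+1/n)(1-\alpha/2)$ in the hypothesis (1.2) is precisely what emerges from matching these two estimates; verifying this balance, and isolating the admissible $\epsilon$, is the main bookkeeping step of the proof.
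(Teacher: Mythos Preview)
Your overall strategy---rule out critical points of $d_p$ outside a large ball via critical-point theory---is correct, and the three ingredients you name (Toponogov, Abresch--Gromoll excess in the asymptotically nonnegative Ricci setting, Zhu--Mahaman volume comparison) are exactly those the paper uses. But the way you assemble them has two genuine gaps.

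First, the Toponogov step. You apply hinge comparison to the large triangle $(p,q_j,q_j^+)$ with sides of length $r_j$. The hypothesis $K(x)\ge -C/d_p(x)^\alpha$ gives no lower bound near $p$ (it blows up as $d_p\to 0$), so Toponogov is simply unavailable for any triangle with $p$ as a vertex. Even if one had a uniform bound $K\ge -C/r_j^\alpha$ on the whole region, the scale is wrong for $\alpha<2$: the relevant dimensionless parameter $\sqrt{C/r_j^\alpha}\cdot r_j=\sqrt{C}\,r_j^{1-\alpha/2}\to\infty$, so the comparison hinge sits deep in hyperbolic space and your lower bound $\psi(r_j)$ on the excess degenerates to $0$. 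The paper avoids both problems by applying Toponogov to a \emph{small} hinge at $x$ with sides of length $\delta r^{\alpha/2}$; this hinge lies entirely in $M\setminus B(p,r/4)$, where $K\ge -4^\alpha C/r^\alpha$, and the product of $\sqrt{|\text{curvature bound}|}$ with the side length is the fixed constant $2^\alpha\sqrt{C}\delta$, yielding a clean comparison that forces the angle at $x$ to exceed $\pi/2$.

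Second, the height bound. Your $h_j$ is the distance from $q_j$ to a minimal geodesic joining $p$ to the \emph{arbitrarily chosen} point $q_j^+$, and there is no mechanism by which the volume defect controls this. What the volume hypothesis does control---via the inclusion $B(x,s)\subset B_{\Sigma_p^c(\infty)}(p,r+s)\setminus B(p,r-s)$ and relative volume comparison---is the distance from $x$ to the set $R_p$ of \emph{rays} emanating from $p$. The paper makes this the pivot: it introduces the ray-density function $H(p,r)=\max_{x\in S(p,r)}d(x,R_p)$, shows first that $H(p,r)\le \epsilon' r^{\frac{1}{n}(\frac{(n-1)\alpha}{2}+1)}$ forces the absence of critical points far out (this is where Abresch--Gromoll and the small-triangle Toponogov are used), and then shows that the volume hypothesis (1.2) yields precisely this bound on $H(p,r)$. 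In short, you must take the far endpoint $q$ on a ray from $p$, not at the end of an arbitrary geodesic out of $q_j$; the criticality of $q_j$ is then exploited at the small-triangle Toponogov step, not in the choice of $q_j^+$.
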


On the other hand, Shen-Wei \cite{10} studied manifolds with nonnegative $k$th Ricci curvature outside a geodesic
ball $B(p,D)$and weak bounded geometry, i.e. $\mathcal{K}=\inf K >-\infty$, $v=\inf vol B(x,1)>0$. They proved that  there is a constant 
$c=c(n, k,\mathcal{K}, v, D)>0$ such that  $M$ has finite topological type, if the volume growth at a point $x\in M$ satisfies
$$ \limsup_{r\to +\infty }\frac{vol B(p,r)}{r^{1+1/(k+1)}}<c. $$
Here we say the $k$th Ricci curvature of $M$, for some $1\le k\le n-1$, satisfies $Ric_{(k)}(x)\ge H$,
at a point $x\in M$ if for all ($k+1$)-dimensional subspaces $V\subset T_xM$,
$$ \sum^{k+1}_{i=1}<R(e_i,v)v,e_i>\  \ge H  \textrm{ for all } v\in V,$$
where $\{e_1, \cdots,e_{k+1}\}$ is any orthonormal basis for $V$. Stimulated 
by their methods, we can extend Theorem 1.1 to the case of $k$th  asymptotically nonnegative Ricci curvature.

\begin{theorem}
Let $M$ be an $n$-dimensional $(n\ge 3)$ complete noncompact Riemannian manifold with
\begin{equation}
Ric_{(k)}(x)\ge -k\lambda(d_{p}(x)), \textrm{ for } 2\le k\le n-1,
\end{equation}
    and  $$ K(x)\ge -\frac{C}{d_{p}(x)^\alpha},$$
where $C(\lambda)=\int_{0}^{+\infty}s\lambda(s)ds < +\infty$ and $C>0$, $0\le \alpha\le 2$.
If $\alpha_p >0$, then there exists a constant 
$\epsilon=\epsilon(n, k,\lambda, C, \alpha)>0$, such that  $M$ has finite topological type, provided that
\begin{equation}
\limsup_{r\to +\infty }\left\{ \left(\frac{vol B(x, r)}{vol B(\bar{x},r)}-\alpha_p\right)r^{\frac{(n-1)k}{k+1}(1-\frac{\alpha}{2})}
\right\}
\leq {\epsilon}\alpha_p.
\end{equation}
\end{theorem}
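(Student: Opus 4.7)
The plan is to adapt Xia's argument for Theorem 1.1 with two ingredients upgraded: the classical Abresch--Gromoll excess estimate is swapped for its $k$th Ricci analogue in the asymptotic setting (in the spirit of Shen--Wei), and the Bishop--Gromov type volume comparison is performed with respect to the $k$th Ricci lower bound. First I would argue by contradiction: if $M$ had infinite topological type, then Grove--Shiohama critical point theory yields a divergent sequence $\{q_i\}$ of critical points of $d_p$ with $r_i := d(p,q_i)\to\infty$. For each such $q_i$ and any ray $\gamma$ from $p$, there is a minimizing segment from $q_i$ to $\gamma(t)$ (for large $t$) whose initial direction makes angle at most $\pi/2$ with the reverse of the $p$-to-$q_i$ geodesic; combined with the sectional curvature lower bound $K\ge -C/d_p^{\alpha}$, Toponogov-type comparison controls the geometry near $q_i$.

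Second, I would establish the following extension of the Abresch--Gromoll excess estimate under $Ric_{(k)}\ge -k\lambda$: Laplacian comparison for the distance function under a $k$th Ricci lower bound produces a differential inequality in which $k$, rather than $n-1$, plays the role of ambient dimension. Combining this with a Bochner-type barrier and the maximum principle along a minimizing segment from $p$ to a far point $y$ should give an estimate of the form
\[
e_{p,y}(x)\le C_{1}(n,k)\, h(x)^{\frac{k+1}{k}}+C_{2}(n,k,\lambda)\, h(x),
\]
where $h(x)$ measures the height of $x$ above the segment. The $\lambda$-correction should be absorbed using the warped comparison model whose shape function $\phi$ satisfies $\phi''=\lambda\phi$, $\phi(0)=0$, $\phi'(0)=1$, so that the constant $C_{2}$ depends on $\lambda$ only through $C(\lambda)$.

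Third, evaluating this excess bound at a critical point $q_i$ and using the $\pi/2$-angle condition together with Toponogov comparison afforded by $K\ge -C/d_p^{\alpha}$, one obtains a quantitative lower bound on the height of $q_i$ of order $r_i^{1-\alpha/2}$. Volume comparison then shows that the existence of such a critical point forces
\[
\frac{vol B(p,r_i)}{vol B(\bar p,r_i)}-\alpha_p \;\ge\; c(n,k,\lambda,C,\alpha)\, r_i^{-\frac{(n-1)k}{k+1}(1-\frac{\alpha}{2})},
\]
so that choosing $\epsilon=\epsilon(n,k,\lambda,C,\alpha)>0$ sufficiently small in (1.4) contradicts the assumption for all large $i$. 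The exponent $\frac{(n-1)k}{k+1}$ arises from balancing the $\frac{k+1}{k}$ power of the $k$-excess against an $(n-1)$-dimensional transverse volume integration; note that taking $k=n-1$ recovers Xia's exponent $n-2+\frac1n$ in Theorem 1.1, consistent with the theorem being a strict generalization.

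The main obstacle will be carrying out the extension of the excess estimate cleanly under \emph{asymptotically} nonnegative $k$th Ricci curvature, since Shen--Wei's argument is written under genuine nonnegativity and Abresch--Gromoll's $\lambda$-bookkeeping is written under the full Ricci bound. One must track the $\lambda$-correction through the modified Laplacian comparison and verify that, after optimization of the barrier, the final constant depends on $\lambda$ only via $C(\lambda)$; the remaining exponent matching and choice of $\epsilon$ are delicate but routine once this estimate is secured.
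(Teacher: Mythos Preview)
Your strategy is essentially the paper's own: combine a generalized Abresch--Gromoll excess estimate for $Ric_{(k)}$ in the asymptotic setting, Toponogov comparison using $K\ge -C/d_p^{\alpha}$, and volume comparison in the asymptotically nonnegative Ricci model. The paper runs this forward (volume hypothesis $\Rightarrow$ ray-density bound $H(p,r)\le \epsilon' r^{\frac{1}{k+1}(1+\frac{k\alpha}{2})}$ $\Rightarrow$ no far critical points), while you phrase the same chain contrapositively.

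Two concrete points would make your plan fail as written. First, the excess form you propose, $e_{p,y}(x)\le C_1 h^{(k+1)/k}+C_2 h$, is missing the essential decay in $r=\min\{d(p,x),d(y,x)\}$: the correct estimate (the paper's Lemma~2.3) is
\[
e_{p,y}(x)\le 8(1+8C_0)^{\frac{k+1}{2k}}\Bigl(\tfrac{h^{k+1}}{r}\Bigr)^{1/k},
\]
with the $\lambda$-correction entering multiplicatively through $C_0=C(\lambda)$, not as an additive linear term. Without the factor $r^{-1/k}$ the Toponogov step cannot close. Second, your height exponent in Step~3 is off. The Toponogov triangle is taken at scale $\delta r^{\alpha/2}$ (so that side-length times $\sqrt{-K}\sim r^{-\alpha/2}$ stays bounded); at a critical point this forces $e_{p,y}(q_i)\gtrsim r_i^{\alpha/2}$, and together with the corrected excess bound one gets $h(q_i)\gtrsim r_i^{\frac{1}{k+1}(1+\frac{k\alpha}{2})}$, not $r_i^{1-\alpha/2}$. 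With that exponent the volume step (which needs only the ordinary Bishop--Gromov comparison for asymptotically nonnegative Ricci, implied by the $k$th-Ricci hypothesis, not a separate ``$k$th-Ricci volume comparison'') indeed yields the deficit $\gtrsim r_i^{-\frac{(n-1)k}{k+1}(1-\alpha/2)}$, and the argument closes exactly as in the paper.
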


\begin{remark}
For $k=n-1$, Theorem 1.2 is just the same one as Theorem 1.1. While for $k=1$, $M$ has asymptotically nonnegative sectional curvature. It was  proved by Abresch \cite{1} that $M$ always has finite topolological type without any additional conditions.
\end{remark}

Denote by $crit_p$ the criticality radius of $M$ at $p$, i.e. $crit_p$ is the smallest critical
value for the distance function $d_p(\cdot)$. Recall a point $x\ne p$ is called critical point of $d_{p}$ if for any $v$ in the tangent space $T_{x}M$ there is minimal geodesic $\gamma$ from $x$ to $p$ forming an angle less or equal to $\pi/2$ with $\gamma'(0)$ (see \cite{4}). In \cite{12} Wang and Xia proved the following theorem

\begin{theorem}
   Given $\beta \in [0, 2]$, positive numbers $r_0$ and $C$, and an integer $ n \ge 2$, there is an
 $\epsilon = \epsilon(n, r_0 , C, \beta) > 0$ such that any complete Riemannian $n$-manifold $M$ with Ricci curvature $Ric_M \ge 0$,
$\alpha_M > 0$, $crit_p \ge r_0$ and 
$$k_p(r)\geq -\frac{C}{(1+r)^{\beta}},\ \frac{volB(p, r)}{\omega_{n}r^{n}}\le \left(1 +\frac{\epsilon}{r^{(n-2+\frac{1}{n})(1-\frac{\beta}{2})}}\right)\alpha_{M},$$
for some $p \in M$ and all $r \ge r_0$ is diffeomorphic to $\mathbb{R}^{n}.$
\end{theorem}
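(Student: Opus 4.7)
The plan is to apply the critical point theory for the distance function $d_p$: once one shows that $d_p$ has no critical points on $M\setminus\{p\}$, the standard gradient-like isotopy of Grove--Shiohama, together with Greene--Shiohama smoothing, gives $M\cong\mathbb{R}^n$. The hypothesis $\mathrm{crit}_p\ge r_0$ already removes critical points inside $B(p,r_0)$, so the whole task reduces to showing that, for $\epsilon$ small enough, $d_p$ has no critical points outside $B(p,r_0)$ either.

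I would argue by contradiction. Suppose $q$ is a critical point of $d_p$ with $r:=d(p,q)\ge r_0$. Since $\alpha_M>0$ there exists a ray $\alpha$ issuing from $p$; set $y=\alpha(L)$ for $L\gg r$. Fix a minimizing segment $\sigma$ from $q$ to $y$; criticality of $q$ produces a minimizing segment from $q$ to $p$ whose initial direction makes angle $\le \pi/2$ with $\sigma'(0)$. Toponogov comparison in the model surface of radial curvature $-C/(1+t)^\beta$, permitted by $k_p(r)\ge -C/(1+r)^\beta$, bounds the distance $h$ from $q$ to the segment $py$. Feeding $h$ into an Abresch--Gromoll style excess estimate adapted to this sectional-curvature decay (of the kind already invoked earlier in this paper) and optimizing in $h$ as a function of $r$ yields
$$
e_{p,y}(q):=d(p,q)+d(q,y)-d(p,y)\,\le\,c_1(n,C,\beta)\,r^{1-(n-2+1/n)(1-\beta/2)}.
$$

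The next step is to convert this excess bound into a lower bound on the Bishop--Gromov deficit at scale $2r$, following Xia's scheme from the nonnegative Ricci case: the small excess at $q$ forces a macroscopic cone of directions at $p$ to miss a tubular neighborhood of $q$, and comparing the volume of this missed region with the Euclidean model gives
$$
\frac{\mathrm{vol}\,B(p,2r)}{\omega_n(2r)^n}-\alpha_M\,\ge\,\frac{c_2(n,C,\beta)}{(2r)^{(n-2+1/n)(1-\beta/2)}}.
$$
Choosing $\epsilon<c_2$ then contradicts the hypothesized upper bound on the volume ratio at scale $2r$, so no such critical $q$ can exist.

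The main obstacle is pinning down the sharp exponent $(n-2+1/n)(1-\beta/2)$ in both the excess estimate and the volume step. The factor $n-2+1/n$ is the classical Abresch--Gromoll exponent that arises from the tube-width optimization $h\sim r^{1/n}$; the additional factor $(1-\beta/2)$ must come from a Jacobi-field (or warped-product) computation in the variable-curvature comparison space that quantifies the distortion introduced by the bound $k_p\ge -C/(1+r)^\beta$. Carrying out this ODE estimate with the correct constants, and verifying that the Abresch--Gromoll tube optimization remains valid in the warped setting, is the analytic heart of the proof; once that is in place, the remaining pieces are standard Toponogov comparison, Bishop--Gromov, and the isotopy lemma.
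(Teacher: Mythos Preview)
This statement is quoted in the paper from Wang--Xia \cite{12} rather than proved there; the paper's own argument for the generalization (Theorem~1.5, together with Theorem~3.1, Lemma~3.2 and the volume estimates in the proof of Theorem~1.2) shows the route actually taken. That route runs in the \emph{opposite} order from yours: one first uses the volume hypothesis, relative volume comparison, and the inclusion $B(x,\tfrac{s}{2})\subset B_{\Sigma_p^c(\infty)}(p,r+\tfrac{s}{2})\setminus B(p,r-\tfrac{s}{2})$ to bound the ray distance $s=d(x,R_p)$ from above for \emph{every} $x$; then Abresch--Gromoll converts small $s$ into small excess; finally Toponogov is applied to a small hinge contained in $M\setminus B(p,\tfrac{r}{4})$, where the sectional curvature has the \emph{constant} lower bound $-4^{\alpha}C/r^{\alpha}$, and forces the angle at $x$ to exceed $\pi/2$. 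No radial-curvature comparison model is invoked.

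Your chain reverses two of these implications, and that is a genuine gap rather than a stylistic difference. First, ``criticality plus Toponogov bounds the distance $h$ from $q$ to the segment $py$'' is not what hinge comparison delivers: an angle $\le\pi/2$ at $q$ yields an upper bound on $d(p,y)$, i.e.\ a \emph{lower} bound on the excess $e_{p,y}(q)$, and says nothing about $h$. The Abresch--Gromoll inequality you then quote needs $h$ as \emph{input}, so the displayed upper bound on $e_{p,y}(q)$ in terms of $r$ alone is unjustified at this stage. Second, the step ``small excess at $q$ forces a cone of directions at $p$ to miss a tubular neighborhood of $q$'' is backwards: small excess means $q$ lies \emph{close} to the ray, so the ray-cone \emph{meets} every tube around $q$. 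In Xia's scheme the volume deficit dominates $vol\,B(q,\tfrac{s}{2})$ precisely because that ball misses \emph{all} rays, which requires $s=d(q,R_p)$ to be large. The workable contrapositive is: $q$ critical $\Rightarrow$ Toponogov on the small triangle gives a \emph{lower} bound on the excess $\Rightarrow$ inverting Abresch--Gromoll gives a \emph{lower} bound on $s$ $\Rightarrow$ the ball-in-non-ray-cone inclusion gives a lower bound on the volume deficit $\Rightarrow$ contradiction with the hypothesis. Your outline never produces the lower bound on $s$, so the contradiction does not close.
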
 

In order to remove the condition of criticality radius, let us to define the function 
\begin{displaymath}\phi_\alpha(r)=\left\{\begin{array}{ll}
 r^\alpha, & \textrm{ for } \ r \ge 1,  \\
   r,& \textrm{ for } \ r < 1.
\end{array} 
\right. 
\end{displaymath}
We will prove a more general result

\begin{theorem}
Let $M$ be an $n$-dimensional $(n\ge 3)$ complete noncompact Riemannian manifold with
\begin{equation}
Ric_{(k)}(x)\ge -k\lambda(d_{p}(x)), \textrm{ for } 2\le k\le n-1,
\end{equation}
    and  
\begin{equation}
 K(x)\ge -\frac{C}{d_{p}(x)^\alpha},
\end{equation}
where $C(\lambda)=\int_{0}^{+\infty}s\lambda(s)ds < +\infty$ and $C>0$, $0\le \alpha\le 2$.
If $\alpha_p >0$, then there exists a constant 
$\epsilon=\epsilon(n, k,\lambda, C, \alpha)>0$, such that  $M$ is diffeomorphic to $\mathbb{R}^{n}$, provided that
\begin{equation}
 \frac{volB(p, r)}{vol B(\bar{p},r)}\le \left(1 +\frac{\epsilon\left(\phi_{\frac{1}{k+1}(\frac{k\alpha}{2}+1)}(r)\right)^{n-1}}{r^{n-1}}\right)\alpha_{p}, \ \textrm{ for  all } r>0.
\end{equation}
 
\end{theorem}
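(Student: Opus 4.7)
The plan is to adapt the strategy of Theorem 1.2 above and of Wang--Xia's Theorem 1.4 so as to upgrade ``finite topological type'' to ``diffeomorphic to $\mathbb{R}^n$'' while dispensing with any assumption on $\mathrm{crit}_p$. The key observation is that the hypothesis $(1.7)$ is required to hold at \emph{every} scale $r>0$, and the piecewise $\phi$ is tailored so that its right-hand side is quantitatively small at every scale. Hence I expect the argument to produce a contradiction at any would-be critical point of $d_p$, so that $d_p$ has no critical point on $M\setminus\{p\}$; the Grove--Shiohama isotopy lemma for distance functions then identifies $M$ diffeomorphically with $\mathbb{R}^n$.

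Let $q$ be a hypothetical critical point of $d_p$ at distance $s=d_p(q)>0$. The first step is to derive, using the $k$th asymptotically nonnegative Ricci bound $(1.5)$ and the sectional lower bound $(1.6)$, an Abresch--Gromoll type excess estimate adapted to the $k$th Ricci setting of Shen--Wei: at an auxiliary point $y$ roughly antipodal to $q$ from $p$ one has
\[
e_p(y) \;\le\; c_1(n,k,\lambda,C,\alpha)\,\phi_{\frac{1}{k+1}\left(\frac{k\alpha}{2}+1\right)}(s),
\]
where $e_p(y)=d(p,q)+d(q,y)-d(p,y)$. The exponent arises by minimizing the annulus width in the $k$th Ricci Laplacian comparison against the scale-dependent sectional decay from $(1.6)$, and the integrability $\int_0^\infty s\lambda(s)\,ds<\infty$ absorbs the error from the $k$th Ricci bound into a universal constant $c_1$. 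The piecewise definition of $\phi$ encodes the fact that the natural excess bound is linear in $s$ for $s<1$ and scales as $s^{\frac{1}{k+1}(\frac{k\alpha}{2}+1)}$ for $s\ge 1$.

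The second step is Xia's volume-deficit construction: the smallness of $e_p(y)$ forces a transverse tube around the broken segment $p$-$q$-$y$, of radius comparable to $\phi(s)$, whose points have distance from $p$ differing from $|y|$ by a definite amount. Applying Zhu--Mahaman's relative volume comparison together with $\alpha_p>0$ and $(1.6)$, this translates into an inequality of the shape
\[
\frac{\mathrm{vol}\,B(p,r)}{\mathrm{vol}\,B(\bar p,r)} \;\ge\; \alpha_p\,\Bigl(1+c_2(n,k,\lambda,C,\alpha)\,\frac{\phi(s)^{n-1}}{r^{n-1}}\Bigr)
\]
for all $r$ in a window $[c_3 s, c_4 s]$. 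Inserting this into the hypothesis $(1.7)$ at scale $r=c_3 s$ and choosing $\epsilon<c_2/2$ produces a contradiction, since $\phi(c_3 s)^{n-1}$ and $\phi(s)^{n-1}$ are comparable by the quasi-homogeneity of $\phi$. Thus $d_p$ admits no critical point on $M\setminus\{p\}$, and integrating a gradient-like vector field for $d_p$ completes the proof.

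The main obstacle is the first step: the excess estimate must be proved with the precise exponent $\frac{1}{k+1}(\frac{k\alpha}{2}+1)$ and with a constant $c_1$ depending only on $n,k,\lambda,C,\alpha$, \emph{uniformly across all scales} $s>0$. This requires carefully balancing the annulus width in the Abresch--Gromoll barrier against the decay rate in $(1.6)$ and the integrability of $\lambda$, and is exactly what fixes the exponent of $\phi$ in the hypothesis $(1.7)$. A secondary delicate point is handling the regime $s<1$, where the sectional bound $(1.6)$ is vacuous but the $k$th Ricci bound still yields an excess bound linear in $s$, exactly matching the piecewise definition of $\phi$.
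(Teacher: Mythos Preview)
Your proposal lands on the right endpoint (show $d_p$ has no critical point off $p$, then invoke the isotopy lemma), but the mechanism you sketch does not close, and it is not the route the paper takes. Two ingredients are missing. First, rays: the paper converts the volume hypothesis $(1.7)$ into a \emph{ray-density} bound. Writing $R_p$ for the union of rays from $p$ and $s=d(x,R_p)$ for $x\in S(p,r)$, one compares $\mathrm{vol}\,B(x,s/2)$ from below (via $\alpha_p>0$ and Mahaman's comparison) against the volume of the non-ray cone $B_{\Sigma_p^c(\infty)}(p,r+\tfrac{s}{2})\setminus B(p,r-\tfrac{s}{2})$, which $(1.7)$ bounds from above; this yields $s\le \epsilon'\,\phi_{\frac{1}{k+1}(\frac{k\alpha}{2}+1)}(r)$ for \emph{every} $r>0$. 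Second, Toponogov: only now does the excess estimate enter. With $x$ close to a ray $\gamma$ one sets $q=\gamma(2r)$, uses Lemma~2.3 to bound $e_{pq}(x)\le 8(1+8C_0)^{\frac{1+k}{2k}}(s^{k+1}/r)^{1/k}\le \tfrac{\delta}{2}r^{\alpha/2}$, and then applies Toponogov's theorem (this is where $(1.6)$ is used) to a hinge at $x$ with legs of length $\delta r^{\alpha/2}$ along minimal geodesics to $p$ and $q$; the excess bound forces the opposite side to exceed $\tfrac{3}{2}\delta r^{\alpha/2}$, whence the hinge angle is $>\pi/2$ and $x$ is not critical. Since this runs at every scale, $M\cong\mathbb{R}^n$.

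Your Steps 1--2 short-circuit both of these. You invoke an excess bound at a hypothetical critical point $q$ with an unspecified $y$, and then claim that \emph{smallness} of the excess produces a ``transverse tube'' and hence a volume \emph{lower} bound contradicting $(1.7)$. That implication runs the wrong way: a small excess says the configuration is nearly collinear, which yields no extra volume. More importantly, nowhere in your outline is the criticality of $q$ actually used; the contradiction you describe would, as written, apply to every point. The paper's argument uses criticality only at the very end, and in the opposite direction: it shows the angle at $x$ is obtuse, hence $x$ \emph{cannot} be critical. A smaller correction: $(1.6)$ is not ``vacuous'' for $r<1$, merely weak; the piecewise definition $\phi(r)=r$ for $r<1$ is calibrated so that the ray-density conclusion becomes $s\le \epsilon' r$ at small scales, precisely what the Toponogov step (Theorem~3.1) needs there.
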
 

In section 2, we will give some Abresch-Gromoll   excess function estimates for manifolds with $k$th  asymptotically nonnegative Ricci curvature. In section 3,  we will show that manifolds with suitable
ray desinty growth condition and curvature decay conditions are  diffeomorphic to $\mathbb{R}^{n}$ or
have finite topological type, and then using it to prove Theorem 1.2 and Theorem 1.5.

\section{Prelimanaries}
Let $M$ be an $n$-dimensional Riemannian manifold. For $p,q\in M$, the excess function $e_{pq}$ is defined by
$$e_{pq}(x)\dot= d_{p}(x) +d_{q}(x) - d(p,q).$$
Let $\gamma$ be a minimal geodesic from $p$ to $q$. If $Ric_{(k)}(x)\ge 0$ on all $x$ of $M$, Abresch-Gromoll \cite{2} (for $k=n-1$) and Shen \cite{8}(for any $k$) proved that
\begin{equation}
e_{pq}(x)\le 8 \left( \frac{s^{k+1}}{r}\right)^{1/k},
\end{equation}
where $s = d(x,\gamma), r=\min\{d(p,x),d(q,x)\}$.

For a manifold with $k$th  asymptotically nonnegative Ricci curvature, we will also give an estimate
for $e_{pq}(x)$. First we need

\begin{lemma}
Let $M$ be complete and $q, x\in M$. Suppose that $x$ is not on the cut locus of $q$, and
$$Ric_{(k)}(x)\ge -k\lambda(d_{p}(x)), \textrm{ with } C_0 = \int_{0}^{+\infty}s\lambda(s)ds < +\infty$$
along the minimal geodesic $\gamma$ from $x$ to $q$. Then for any orthonormal set
$\{ e_1, \cdots, e_{k+1}\}$ in $T_x M$ with $ \dot\gamma(0) \in span\{e_i\}$,
\begin{displaymath} \sum^{k+1}_{i=1}\nabla^2d_q(e_i,e_i)\le \left\{\begin{array}{ll}
\frac{1+\sqrt{1+8C_0}}{2}\cdot \frac{k}{d(p,x)}, & \textrm{ for } q=p,  \\
   \frac{k\sqrt{2C_0}}{ d(p,q)-d(q,x)}+\frac{k}{d(q,x)},& \textrm{ for } q\neq p, d(q,x)<d(p,q).
\end{array} 
\right. 
\end{displaymath}
\end{lemma}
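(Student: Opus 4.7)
The plan is to apply the index form (second variation) inequality along the minimizing geodesic from $x$ to $q$, sum it over a carefully chosen orthonormal frame so that the hypothesis on $\operatorname{Ric}_{(k)}$ is activated, and then reduce everything to a one-dimensional functional that can be compared with a model ODE.

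Parametrize the geodesic $\gamma\colon[0,L]\to M$ by arclength with $\gamma(0)=x$, $\gamma(L)=q$ and $L=d(x,q)$. Since $\dot\gamma(0)\in\operatorname{span}\{e_i\}$ we may take $e_1=\dot\gamma(0)$; because $|\nabla d_q|=1$ off the cut locus, $\nabla^2 d_q(e_1,e_1)=0$, so only the $k$ remaining orthonormal vectors $e_2,\ldots,e_{k+1}$, all perpendicular to $\dot\gamma(0)$, contribute. Parallel-transport each such $e_i$ along $\gamma$ to obtain $E_i(s)$ and test with the variation $V_i(s)=f(s)E_i(s)$ for a common function $f$ with $f(0)=1$, $f(L)=0$. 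The index inequality yields
\[
\nabla^2 d_q(e_i,e_i)\le \int_0^L\bigl[(f')^2-K_i(s)f(s)^2\bigr]\,ds,\qquad K_i(s)=\langle R(E_i,\dot\gamma)\dot\gamma,E_i\rangle_{\gamma(s)}.
\]
Summing over $i\ge 2$ and invoking $\sum_{i=2}^{k+1}K_i=\operatorname{Ric}_{(k)}(\dot\gamma)\ge -k\lambda(d_p(\gamma(s)))$ then gives
\[
\sum_{i=1}^{k+1}\nabla^2 d_q(e_i,e_i)\le k\int_0^L\bigl[(f')^2+\lambda(d_p(\gamma(s)))f^2\bigr]\,ds.
\]

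For $q=p$ one has $d_p(\gamma(s))=L-s$ along $\gamma$, and after the substitution $\tau=L-s$ the minimum of the right-hand side over admissible $f$ equals $k\psi'(L)/\psi(L)$, where $\psi$ solves the model equation $\psi''=\lambda\psi$ with $\psi(0)=0$, $\psi'(0)=1$. To bound $G:=L\psi'(L)/\psi(L)$ I would start from the integrated identity
\[
L\psi'(L)-\psi(L)=\int_0^L s\lambda(s)\psi(s)\,ds
\]
obtained by integrating $(s\psi')'=\psi'+s\lambda\psi$, then apply Cauchy--Schwarz in the form $\bigl(\int s\lambda\psi\bigr)^2\le\bigl(\int s^2\lambda\bigr)\bigl(\int\lambda\psi^2\bigr)$; combined with the integration-by-parts identity $\int_0^L\lambda\psi^2=\psi(L)\psi'(L)-\int_0^L(\psi')^2$, the moment estimate $\int_0^L s^2\lambda\le LC_0$, and the lower bound $\int_0^L(\psi')^2\ge \psi(L)^2/L$, this produces a quadratic inequality in $G$ whose positive root is the stated constant $(1+\sqrt{1+8C_0})/2$. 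For $q\ne p$ with $d(q,x)<d(p,q)$, the triangle inequality gives $d_p(\gamma(s))\ge d(p,q)-(L-s)$, so by monotonicity $\lambda(d_p(\gamma(s)))\le \lambda(d(p,q)-L+s)$; choosing $f(s)=1-s/L$ contributes $k/L=k/d(q,x)$ from the $(f')^2$ term, and after the substitution $u=d(p,q)-L+s$ a Cauchy--Schwarz step using $\int_R^\infty u\lambda\,du\le C_0$ with $R=d(p,q)-d(q,x)$ converts the $\lambda$-integral into the factor $k\sqrt{2C_0}/R$.

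The main technical obstacle is extracting the sharp numerical constants: a naive convexity-only estimate on the integrated identity only yields $G\le 1+C_0$, and obtaining the stated square-root form requires that Cauchy--Schwarz be applied to the integrated identity together with retention of the subtracted term from the integration by parts, so that the resulting inequality becomes quadratic in $G$ rather than linear. The $q\ne p$ case is conceptually a parallel calculation, but the triangle inequality must be used to ensure $d_p$ stays bounded below by $R$ along $\gamma$ so that the tail bound $\int_R^\infty u\lambda\le C_0$ can be invoked in place of the global moment bound.
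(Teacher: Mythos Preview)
Your reduction via the index form is sound and is essentially the variational counterpart of the paper's Riccati argument: both routes lead to bounding $v(L)=\psi'(L)/\psi(L)$, where $\psi''=\lambda\psi$, $\psi(0)=0$, $\psi'(0)=1$. The gap is in how you extract the constant. If you actually carry out the Cauchy--Schwarz step you describe for $q=p$, you get
\[
(G-1)^2\psi(L)^2=\Bigl(\int_0^L s\lambda\psi\Bigr)^2\le\Bigl(\int_0^L s^2\lambda\Bigr)\Bigl(\int_0^L\lambda\psi^2\Bigr)\le LC_0\Bigl(\psi(L)\psi'(L)-\tfrac{\psi(L)^2}{L}\Bigr)=C_0(G-1)\psi(L)^2,
\]
so the quadratic collapses to $G-1\le C_0$. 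This is not the bound $(1+\sqrt{1+8C_0})/2$; for large $C_0$ it is strictly weaker (of order $C_0$ rather than $\sqrt{C_0}$), and in fact the Dirac-mass example $\lambda=(C_0/s_0)\delta_{s_0}$ shows that $G=1+C_0$ is attained if one only uses $\int s\lambda=C_0$. The same issue arises in your $q\ne p$ argument: the linear test function plus Cauchy--Schwarz produces a factor $2C_0/R$, not $\sqrt{2C_0}/R$.

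What is missing is the monotonicity of $\lambda$. Since $\lambda$ is nonincreasing, $\tfrac{r^2}{2}\lambda(r)\le\int_0^r s\lambda(s)\,ds\le C_0$, hence $\lambda(s)\le 2C_0/s^2$ pointwise. With this pointwise bound one compares $v'+v^2=\lambda\le 2C_0/s^2$ directly against $\bar v=a/s$, where $a(a-1)=2C_0$; standard Riccati comparison (both solutions $\sim 1/s$ at $0$, with $a>1$) gives $v\le\bar v$ and hence $G\le(1+\sqrt{1+8C_0})/2$. For $q\ne p$ one similarly checks that $\bar v(t)=\sqrt{2C_0}/(d(p,q)-t)+1/t$ is a supersolution of $v'+v^2=2C_0/(d(p,q)-t)^2$. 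This pointwise comparison is exactly what the paper invokes via Abresch--Gromoll's Lemmas 3.2--3.4; your integral manipulations cannot recover these constants because they never use that $\lambda$ is nonincreasing.
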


\begin{proof}
For $q=p$, let $\gamma(s): [0, r] \to \mathbb{R}$ be the minimal normal geodesic  from $x$ to $p$. Since
$\dot\gamma(0) \in span\{e_i\}$, without loss of the generality, we may assume that 
$\textrm{grad} d_p(x)$=$\dot\gamma(0)=e_1$ and along $\gamma(t)$ have a orthonormal frame such that $e_i(r)=e_i$,
for $i=1, \cdots, k+1$. Put $N= \textrm{grad} d_p$, from \cite{3}, we have
\begin{eqnarray*}
&&\sum^{k+1}_{i=1}<R(e_i,N)N, e_i>\\& =& \sum^{k+1}_{i=2}<(\nabla_{e_i}\nabla_N-\nabla_N\nabla_{e_i}-\nabla_{[e_i, N]})N,e_i>\\
&= &-\sum^{k+1}_{i=2}N<\nabla_{e_i}, e_i>-\sum^{k+1}_{i=2}\sum^{n}_{j=2}<\nabla_{e_i}N,e_j><\nabla_{e_j}N, e_i>\\
&=& -\left(\sum^{k+1}_{i=2}h_{ii} \right)^\prime-\sum^{k+1}_{i=2}\sum^{n}_{j=2}h^2_{ij},\end{eqnarray*}
where $h_{ij}=<\nabla_{e_i}N,e_j>$ is the second fundamental form of the distance sphere from $p$.
From the Schwarz inequality and $k$th  asymptotically nonnegative Ricci curvature condition, we have
$$ -k(s)\lambda(s)\le -\left(\sum^{k+1}_{i=2}h_{ii} \right)^\prime-\frac{1}{k}\left(\sum^{k+1}_{i=2}h_{ii} \right)^2.$$
Note that,  $$ \sum^{k+1}_{i=2}h_{ii}(s) \sim \frac{k}{s}, \ \text{ as }s \to 0.$$
Consider the Riccati equation 
$$ v^\prime(s)+v^2(s)-\lambda(s)=0$$
satisfying
$$ v(s) \to \frac{1}{s}, \ \text{ as }s \to 0.$$
Standard comparison argument yields 
 $$\frac{1}{k} \sum^{k+1}_{i=2}h_{ii}(s) \le v(s).$$
From Lemma 3.4 in \cite{2}, we get
$$ v(r) \le \frac{1+\sqrt{1+8C_0}}{2r},$$
so we have 
\begin{equation} \sum^{k+1}_{i=1}\nabla^2d_q(e_i,e_i)= \sum^{k+1}_{i=2}h_{ii}(r)\le \frac{1+\sqrt{1+8C_0}}{2}\cdot \frac{k}{d(p,x)}.\end{equation}

For $q\neq p$ and $d(q,x)<d(p,q)$, using the similar argument and Lemma 3.2, 3.3 in \cite{2}, we have
\begin{equation} \sum^{k+1}_{i=1}\nabla^2d_q(e_i,e_i)\le\frac{k\sqrt{2C_0}}{ d(p,q)-d(q,x)}+\frac{k}{d(q,x)}.
\end{equation} 
\end{proof}

\begin{lemma}
Let $M$ be an $n$-dimensional ($n\ge 3$) complete Riemannian manifold   and let $\gamma$  be a minimal geodesic joining the base point $p$ and another point $q\in M$, $x\in M$ is a third point such that
$s<\min\{d(p,x),d(q,x), d(p,q)-d(q,x)\}$, where $s = d(x,\gamma)$.  Suppose $C_0 = \int_{0}^{+\infty}s\lambda(s)ds < +\infty$ and
$$Ric_{(k)}(x)\ge -k\lambda(d_{p}(x)), \textrm{ for } 2\le k\le n-1,$$
then
\begin{equation}
e_{pq}(x)\le  \frac{2k}{k-1}\frac{d(p,x)-s}{\sqrt{2C_0}s}\sinh \frac{\sqrt{2C_0}s}{d(p,x)-s}\left(\frac{C_2(s)}{2(k+1)}s^{k+1} \right)^{1/k},
\end{equation}
where $C_2(s)=\frac{1+\sqrt{1+8C_0}}{2} \frac{k}{d(p,x)-s}+\frac{k\sqrt{2C_0}}{ d(p,q)-d(q,x)-s}+\frac{k}{d(q,x)-s}$.
\end{lemma}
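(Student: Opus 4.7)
The plan is to adapt the Abresch--Gromoll excess estimate, in the $k$-Ricci formulation due to Shen~\cite{8}, to the present asymptotic setting by using the Hessian bounds of Lemma 2.1 in place of the pointwise bounds valid under $Ric_{(k)}\ge 0$. Let $y_{0}\in\gamma$ realize $d(x,\gamma)=s$, let $\sigma$ be a minimizing geodesic from $x$ to $y_{0}$, and fix a $(k+1)$-dimensional subspace $V\subset T_{x}M$ containing $\dot\sigma(0)$, with an orthonormal basis $\{e_{1},\ldots,e_{k+1}\}$. On the $(k+1)$-dimensional disk $D=\exp_{x}(B_{s}\cap V)$, equipped with the radial coordinate $r=d(x,\cdot)$, let $\mathcal{L}=\sum_{i=1}^{k+1}\nabla^{2}(\cdot)(e_{i},e_{i})$ denote the partial Laplacian in the parallel frame.

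The first step is the pointwise inequality $\mathcal{L}e_{pq}\le C_{2}(s)$ on $D$. For any $z\in D$ with $d(x,z)\le s$, the triangle inequality keeps $d(p,z)-s$, $d(q,z)-s$, and $d(p,q)-d(q,z)-s$ positive by the hypothesis on $s$. Applying Lemma 2.1 to $d_{p}$ (case $q=p$) and to $d_{q}$ (case $q\neq p$, $d(q,z)<d(p,q)$) and summing the two inequalities produces exactly the constant $C_{2}(s)$ as an upper bound for $\mathcal{L}e_{pq}(z)$ wherever $d_{p}$ and $d_{q}$ are smooth; the usual Calabi upper-barrier trick handles the cut loci of $p$ and $q$ so that the inequality holds throughout $D$ in the barrier sense.

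The second step is to construct a radial supersolution $\psi(r)$ on $D$ with $\mathcal{L}\psi\ge C_{2}(s)$, $\psi(0)=0$, and $\psi\ge e_{pq}$ on $\partial D$ (using $e_{pq}\le 2s$ there), and then apply the maximum principle on $e_{pq}-\psi$, together with $e_{pq}\ge 0$ and $e_{pq}(y_{0})=0$, to conclude $e_{pq}(x)\le\psi(s)$. The form of the right-hand side in~(2.6) reflects three superimposed contributions: the $k$-Ricci power-mean inequality $\bigl(\sum_{i=2}^{k+1}h_{ii}\bigr)^{2}\le k\sum_{i=2}^{k+1}h_{ii}^{2}$ (also used in the proof of Lemma 2.1) converts the trace bound into a Riccati-type inequality and is responsible for the $s^{(k+1)/k}$ scaling and the $\tfrac{2k}{k-1}$ prefactor; the integration of the modified Riccati equation carrying the $\lambda$-contribution, following Lemma 3.4 of~\cite{2}, produces the $\sinh$-factor with argument $\sqrt{2C_{0}}\,s/(d(p,x)-s)$; and the uniform Hessian bound $C_{2}(s)$ sets the overall linear scale. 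Matching these pieces reproduces the bound claimed.

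The main obstacle is the barrier analysis itself: one must solve the model ODE so that these three effects---the $k$-Ricci power-mean factor, the $C_{0}$-dependent $\sinh$ factor, and the trace-Hessian bound $C_{2}(s)$---assemble with the precise constants given in~(2.6). The remaining issues (cut-locus treatment, choice of the $(k+1)$-subspace $V$, injectivity-radius bookkeeping) are routine and follow the pattern of~\cite{2} and~\cite{8}.
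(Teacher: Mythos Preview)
Your proposal has a genuine structural gap. Restricting to a fixed $(k+1)$-dimensional disk $D=\exp_x(B_s\cap V)$ and running a maximum principle there does not work: the parallel-transported frame $\{e_i\}$ along radial geodesics is not tangent to $D$ unless $D$ is totally geodesic, so $\mathcal{L}=\sum_i\nabla^2(\cdot)(e_i,e_i)$ (with the \emph{ambient} Hessian) is not an elliptic operator on $D$, and the usual maximum principle cannot be invoked. Moreover, your boundary/initial data are inverted: with $\psi(0)=0$ at the center $x$ and $\psi\ge e_{pq}$ on $\partial D$, a comparison would force $e_{pq}(x)\le\psi(0)=0$, not $e_{pq}(x)\le\psi(s)$; the conclusion you state does not follow from the setup you describe.

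The paper (following Abresch--Gromoll and Shen) works on the \emph{full} ball $\overline{B(x,R)}\subset M$ for $s<R<\min\{d(p,x),d(q,x),d(p,q)-d(q,x)\}$, and considers $f(y)=C\,\Phi_R(d_x(y))-e_{pq}(y)$, where $\Phi_R$ is the Abresch--Gromoll radial primitive built from $S_\kappa$ with $\kappa=-\,2C_0/(d(p,x)-R)^2$ (this Ricci lower bound on $B(x,R)$, obtained from Lemma~3.3 of \cite{2}, is what produces the $\sinh$ factor). At each $y$ one chooses a $(k+1)$-frame in $T_yM$ containing $\nabla d_x(y)$, $\nabla d_p(y)$ and $\nabla d_q(y)$ (possible since $k+1\ge 3$); then the design of $\Phi_R$ together with Lemma~2.1 gives $\sum_i\nabla^2 f(e_i,e_i)\ge C-C_2(R)>0$ in the barrier sense. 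Hence $f$ has no interior local maximum (at such a point the full Hessian is $\le 0$, so every partial trace is $\le 0$). Combining $f|_{S(x,R)}\le 0$, $f>0$ on $\gamma\cap\overline{B(x,s)}$, and the Lipschitz bound $e_{pq}(x)\le e_{pq}(y)+2\rho$ for $y\in S(x,\rho)$, one obtains $e_{pq}(x)\le\min_{0\le\rho\le R}\{C\Phi_R(\rho)+2\rho\}$; optimizing in $\rho$, then letting $R\to s$ and $C\to C_2(s)$, yields the stated bound. The key point you are missing is that the ``partial Laplacian'' is used only to rule out interior maxima on the whole ball, not to run a PDE comparison on a lower-dimensional slice.
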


\begin{proof}
The argument is  using a modification methods of \cite{2} and \cite{10}. Denote
$S_\kappa(t)=\frac{\sinh \sqrt{-\kappa}t}{\sqrt{-\kappa}}$ for $\kappa <0$.
Take any  $s<R<\min\{d(p,x),d(q,x), d(p,q)-d(q,x)\}$ and  $C>C_2(R)$. Define $f:\overline{B(x,R)}\to \mathbb{R}$
as
$$f(y)=C\Phi_R(d_x(y))-e_{pq}(y), y\in \overline{B(x,R)},$$
where 
$$\Phi_R(\rho)=\int\int_{\rho\le t\le\tau\le R}\left( \frac{\dot S_\kappa(\tau)}{S_\kappa(t)}\right)^kd\tau dt.$$
Notice that from Lemma 3.3 in \cite{2}, the lower bound $\kappa$ on the Ricci curvature in the ball
$B(x,R)$ can be controlled by
$$Ric(y)\ge -(n-1)\frac{2C_0}{(d(p,x)-R)^2},  \  \forall y \in B(x,R), $$
Following the proof of Proposition 2.3 in \cite{2} and using Lemma 2.1, we can show that
for any $y\in\overline{B(x,R)}\setminus \{ x\}$, there is an orthonormal set
$\{ e_1, \cdots, e_{k+1}\}$ in $T_y M$ such that the following inequality holds in a
generalized sence:
\begin{eqnarray*}
&&\sum^{k+1}_{i=1}\nabla^2 f(e_i,e_i)\\& =& C\left(\Phi_R^{\prime\prime}\sum^{k+1}_{i=1}|\nabla_{e_i}d_x|^2+\Phi_R^{\prime} \sum^{k+1}_{i=1}\nabla^2d_x(e_i,e_i)\right)- \sum^{k+1}_{i=1}\nabla^2d_p(e_i,e_i)- \sum^{k+1}_{i=1}\nabla^2d_q(e_i,e_i)\\
&\ge & C-C_2(R)>0.
\end{eqnarray*}
Thus $f$ has no locally maximal point in $B(x, R)$. Since $f|_{S(x,R)}\le 0$ 
and $f|_{\overline{B(x,s)}\cap \gamma}> 0$, we know that
\begin{eqnarray*}
e_{pq}(x)&\le& \min_{0\le \rho\le R}\{\min_{y\in S(x,\rho)} e_{pq}(y)+2\rho\}\\
&\le& \min_{0\le \rho\le R}\{C \Phi_R(\rho)+2\rho\}\\
&\le& \min_{0\le \rho\le R}\left\{2\rho+\frac{C(S_{\kappa R^2}(1))^k}{2(k+1)}\left[\frac{2R^{k+1}}{k-1}(\rho^{1-k}-R^{1-k})+\rho^2-R^2 \right]\right\}\\
&\le &\frac{2k}{k-1}\frac{d(p,x)-R}{\sqrt{2C_0}R}\sinh \frac{\sqrt{2C_0}R}{d(p,x)-R}\left(\frac{CR^{k+1}}{2(k+1)} \right)^{1/k}.
\end{eqnarray*}
Let $R\to s$ and $C\to C_2(s)$, we get (2.4).
\end{proof}

Using Lemma 2.2  and an easy argument, we  can get a excess estimate for manifold with 
$k$th  asymptotically nonnegative Ricci curvature, which can be considered as an extended estimate of
Abresch-Gromoll  and Shen.
\begin{lemma}
Suppose $$Ric_{(k)}(x)\ge -k\lambda(d_{p}(x)), \textrm{ for } 2\le k\le n-1,$$
then

\begin{equation}
e_{pq}(x)\le 8 (1+8C_0)^{\frac{1+k}{2k}}\left( \frac{s^{k+1}}{r}\right)^{\frac{1}{k}},
\end{equation}
where $s = d(x,\gamma), r=\min\{d(p,x),d(q,x)\}$.

\end{lemma}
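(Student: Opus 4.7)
The plan is to split on the ratio $s/r$: when $s$ and $r$ are comparable (the ``easy'' regime) we use only the triangle inequality, and when $s\ll r$ we feed everything into Lemma~2.2 and carefully track the constants.

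The starting observation is the soft bound $e_{pq}(x)\le 2s$: if $\gamma(t_{0})$ denotes the point on $\gamma$ closest to $x$, then $d(p,x)\le t_{0}+s$ and $d(q,x)\le d(p,q)-t_{0}+s$, so adding the two inequalities gives the universal estimate while also recording the side bound $d(p,q)-d(q,x)\ge t_{0}-s\ge d(p,x)-2s$. Setting $L:=4^{k}(1+8C_{0})^{(k+1)/2}$, the easy regime $r\le Ls$ is immediate: a short manipulation gives $8(1+8C_{0})^{(k+1)/(2k)}(s^{k+1}/r)^{1/k}\ge 2s\ge e_{pq}(x)$, and (2.6) already holds.

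In the remaining regime $r>Ls$, note that $L\ge 16$ for $k\ge 2$ and $C_{0}\ge 0$, so the constraints $s<\min\{d(p,x),d(q,x),d(p,q)-d(q,x)\}$ required by Lemma~2.2 all follow from $d(p,x),d(q,x)\ge r>s$ together with the side bound above. The remaining work is to bound each of the three factors in (2.4). All denominators in $C_{2}(s)$ exceed $(1-3/L)r\ge 13r/16$; using $(1+\sqrt{1+8C_{0}})/2\le\sqrt{1+8C_{0}}$ and $\sqrt{2C_{0}}\le\tfrac{1}{2}\sqrt{1+8C_{0}}$ then yields $C_{2}(s)\le A\,k\sqrt{1+8C_{0}}/r$ for a small absolute constant $A$. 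The argument $u:=\sqrt{2C_{0}}\,s/(d(p,x)-s)$ of the sinh prefactor is at most $2\sqrt{2C_{0}}/L$, and an elementary optimization over $C_{0}\ge 0$ (with maximum near $C_{0}=1/16$) shows that it is uniformly small for all $k\ge 2$; hence $\sinh(u)/u\le\cosh(u)$ contributes only a harmless $O(1)$, and the outer prefactor $2k/(k-1)$ is at most $4$.

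Plugging these estimates into (2.4) collapses its right-hand side to $B\cdot(1+8C_{0})^{1/(2k)}\cdot(s^{k+1}/r)^{1/k}$ for a universal numerical constant $B$. Since $(1+8C_{0})^{1/2}\ge 1$, this is dominated by $8(1+8C_{0})^{(k+1)/(2k)}(s^{k+1}/r)^{1/k}$ as soon as $B\le 8$, and tracking the constants from the preceding paragraph confirms this. The main obstacle is therefore not geometric but purely arithmetical: keeping enough slack everywhere to land at the specific numerical constant $8$. Conceptually the proof is just the ``triangle inequality versus Lemma~2.2'' dichotomy, exactly as the phrase ``easy argument'' preceding the statement suggests.
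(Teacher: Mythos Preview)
Your proposal is correct and is exactly the approach the paper indicates: the paper gives no detailed proof of Lemma~2.3 at all, only the sentence ``Using Lemma~2.2 and an easy argument,'' and your dichotomy---triangle inequality $e_{pq}(x)\le 2s$ when $r\le Ls$, Lemma~2.2 with constant-tracking when $r>Ls$---is precisely that easy argument made explicit. The side bound $d(p,q)-d(q,x)\ge d(p,x)-2s$, the verification of the hypotheses of Lemma~2.2, and the estimates on $C_2(s)$ and on the $\sinh$ prefactor are all sound, and the numerical constant indeed lands comfortably below $8$.
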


\section{proof of the theorems}

Let $R_{p}$ denotes the set of all ray issuing from $p$ and  $H(p,r) = \max_{x\in S(p,r)}d(x,R_{p}).$
  For a manifold with quadratic sectional curvature decay,  Wang and  Xia \cite{12} proved that there exists a constant $\epsilon$ such that if $H(p,r) < \epsilon r$, then it is  diffeomorphic to $\mathbb{R}^{n}$.
We will extend it to the following
\begin{theorem}
Given $C>0$, and $\alpha\in[0,2]$, suppose that $M$ is an n-dimensional
complete noncompact Riemannian manifold with
 $K(x)\geq -\frac{C}{d_{p}(x)^{\alpha}}$, then  there exists
 a positive constant $\epsilon=\epsilon(\alpha,C) $ such that if $H(p,r)< \epsilon \phi_{\frac{\alpha}{2}}(r)$, then $M$
 is diffeomorphic to $\mathbb{R}^{n}$.
 \end{theorem}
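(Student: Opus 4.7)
The plan is to apply Grove--Shiohama's critical-point theory to the distance function $d_p$: it suffices to show, under the stated hypothesis, that $d_p$ has no critical points in $M\setminus\{p\}$, for then a gradient-like flow produces a diffeomorphism $M\cong\mathbb{R}^n$. Points with $d_p(x)$ less than the injectivity radius at $p$ are automatically regular, so we may restrict attention to critical points at distance $r=d(p,x)$ bounded away from $0$.

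Suppose for contradiction that $x$ is a critical point of $d_p$ with $r=d(p,x)$. By the ray-density hypothesis pick a ray $\sigma\colon[0,\infty)\to M$ from $p$ and a parameter $t_0\ge 0$ with $d(x,\sigma(t_0))<\epsilon\phi_{\alpha/2}(r)$; the triangle inequality forces $|t_0-r|<\epsilon\phi_{\alpha/2}(r)$. For each large $T$ set $q_T=\sigma(T)$, so that $d(p,q_T)=T$ and
\[
d(x,q_T)\ \le\ d(x,\sigma(t_0))+(T-t_0)\ <\ (T-r)+2\epsilon\phi_{\alpha/2}(r).
\]
The criticality of $x$ supplies a minimal geodesic from $x$ to $p$ whose initial direction makes angle $\le\pi/2$ with the initial direction of the minimal segment $xq_T$. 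Apply a Toponogov-type hinge comparison to the triangle $\triangle p\,x\,q_T$ at the vertex $x$: using the lower curvature bound $K\ge -C/d_p^\alpha$ via a warped-product radial comparison (Kasue-type, matched to the warping $f''/f=C/t^\alpha$), obtain an inequality of the form
\[
\cosh\!\bigl(\kappa_r\,d(p,q_T)\bigr)\ \le\ \cosh(\kappa_r r)\,\cosh\!\bigl(\kappa_r d(x,q_T)\bigr),
\]
where $\kappa_r\asymp r^{-\alpha/2}$ encodes the effective curvature scale at distance $r$. Substituting the bound on $d(x,q_T)$ and sending $T\to\infty$ reduces to an inequality of the schematic form
\[
e^{2\epsilon\,\kappa_r\phi_{\alpha/2}(r)}-1\ \ge\ \tanh(\kappa_r r).
\]
By the definition of $\phi_{\alpha/2}$ the quantity $\kappa_r\phi_{\alpha/2}(r)$ is uniformly bounded in $r$, whereas $\tanh(\kappa_r r)$ is bounded below by an explicit positive constant depending only on $C$ and $\alpha$ (once $r$ is bounded away from $0$). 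Hence choosing $\epsilon=\epsilon(\alpha,C)$ small enough makes the left side strictly smaller than the right, the desired contradiction.

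The main obstacle is the degeneracy of the curvature lower bound $-C/d_p^\alpha$ near $p$: any triangle with $p$ as a vertex meets a region in which the bound is unbounded below, so the standard Toponogov theorem with a single constant-curvature model cannot be applied directly. The essential technical step is therefore to set up the correct variable-curvature (radial) hinge comparison matched to the warping $f''/f=C/t^\alpha$, and then to track the scaling of the resulting $\cosh$-inequality as $T\to\infty$; the piecewise definition of $\phi_{\alpha/2}$ is dictated precisely by the requirement that $\kappa_r\phi_{\alpha/2}(r)$ remain bounded in all regimes $r\in(0,\infty)$, so that a single small $\epsilon$ suffices to close the argument uniformly.
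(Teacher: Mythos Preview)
Your overall plan---use Grove--Shiohama critical-point theory and a Toponogov-type hinge inequality to force the angle at $x$ above $\pi/2$---matches the paper's. The divergence is at the step you yourself flag as the main obstacle: the singularity of $-C/d_p^\alpha$ at $p$. You propose to handle it by invoking a Kasue-type radial hinge comparison for the large triangle $\triangle p\,x\,q_T$ and then letting $T\to\infty$. This is only named, not carried out: you would need a genuine Toponogov theorem relative to a rotationally symmetric model with warping $f''/f=C/t^\alpha$, applied at the vertex $x$ (not at the pole $p$), and you would have to justify passing to the limit $T\to\infty$ in that setting. None of this is routine, and your ``schematic'' inequality with $\kappa_r\asymp r^{-\alpha/2}$ tacitly uses only the curvature scale near $x$, ignoring the much more negative bound near $p$ that the triangle actually traverses.

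The paper avoids all of this with a local trick you should adopt. Fix $q=\gamma(2r)$, let $\sigma_1,\sigma_2$ be minimal geodesics from $x$ to $p$ and to $q$, and set $\tilde p=\sigma_1(4\epsilon\phi_{\alpha/2}(r))$, $\tilde q=\sigma_2(4\epsilon\phi_{\alpha/2}(r))$. The angle at $x$ in $\triangle x\tilde p\tilde q$ equals the angle you care about, but this small triangle lies entirely in $M\setminus B(p,r/4)$, where one has the \emph{constant} lower bound $K\ge -4^\alpha C/r^\alpha$. Ordinary Toponogov in constant curvature then applies directly; the excess bound $e_{pq}(x)\le 2s\le 2\epsilon\phi_{\alpha/2}(r)$ gives $d(\tilde p,\tilde q)\ge 6\epsilon\phi_{\alpha/2}(r)$, and a short $\cosh$ computation (with all side lengths comparable to $\epsilon\phi_{\alpha/2}(r)$ and curvature scale $r^{-\alpha/2}$, so that the arguments are uniformly bounded) yields $\theta>\pi/2$ for $\epsilon$ small depending only on $C,\alpha$. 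This replaces your unexecuted radial comparison by an elementary argument and dispenses with the limit $T\to\infty$ entirely.
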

\begin{proof}
Let $\delta$ be a solution of the inequality $ \cosh ({2^{2+\alpha}\sqrt{C}}\epsilon )-\cosh^{2}({\frac{3}{2}2^{2+\alpha}\sqrt{C}}\epsilon)<0$
and take $\epsilon= \min\{\frac{1}{4}\delta, \frac{3}{32}\}$. From the Disk Theorem (cf. \cite{4}), it suffices to show that $d_p$ has no critical point other than $p$. Take an arbitrary point $x$($\neq p) \in M$ and let $r = d(p,x)$. 
Since $R_p$ is closed, there exists a ray $\gamma$ issuing from $p$ such that $s = d(x,\gamma)$. From
our condition, we have
\begin{equation}
s \le \epsilon \phi_{\frac{\alpha}{2}}(r).
\end{equation}
Let $q = \gamma (2r)$ and 
 $\sigma_{1}$ and $\sigma_{2}$ be geodesics joining $x$ to $p$ and $q$ respectively.
Set $\tilde{p} = \sigma_{1} (4\epsilon\phi_{\frac{\alpha}{2}}(r))$; $\tilde{q} = \sigma_{2} (4\epsilon\phi_{\frac{\alpha}{2}}(r))$. 
Consider
the triangle $\Delta( x,\tilde{p},\tilde{q})$, if $y$ is a point on this triangle, then
$$d(p,y)\ge d(p,x)-d(x,y)\ge d(p,x)-d(\tilde{p},x)-d(\tilde{p},y)$$
and
$$d(p,y)\ge d(p,x)-d(x,y)\ge d(p,x)-d(\tilde{q},x)-d(\tilde{q},y),$$
which means $d(p,y)\ge r-8 \epsilon \phi_{\frac{\alpha}{2}}(r)>\frac{r}{4}$.
Hence  the triangle $\Delta( x,\tilde{p},\tilde{q})\subset M\setminus B(p,\frac{r}{4}).$
Applying the Toponogov Theorem to the triangle $\Delta (x,\tilde{p},\tilde{q})$ we have:
\begin{equation}\label{p}
 \cosh \left(\frac{2^{\alpha}\sqrt{C}}{r^{\frac{\alpha}{2}}}d(\tilde{p},\tilde{q})\right)\le \cosh^{2}\left(\frac{2^{\alpha}\sqrt{C}}{r^{\frac{\alpha}{2}}}d(\tilde{p},x)\right)-\sinh^{2}\left(\frac{2^{\alpha}\sqrt{C}}{r^{\frac{\alpha}{2}}}d(\tilde{p},x)\right)\cos\theta,
\end{equation}
where $\theta =\angle\sigma_{1}'(0),\sigma_{2}'(0).$ 
Since $e_{pq}(x)\le 2s \le 2\epsilon \phi_{\frac{\alpha}{2}}(r)$,  from 
  triangle inequality, we have
\begin{equation}\begin{array}{rl}d(\tilde{p},\tilde{q})&\ge  d(p,q)-d(p,x)+d(\tilde{p},x)-d(x,q) +d(\tilde{q},x)\\&\ge 8\epsilon \phi_{\frac{\alpha}{2}}(r) - e_{pq}(x)\\
&\ge 6\epsilon \phi_{\frac{\alpha}{2}}(r).\end{array}\end{equation}
From (3.2), (3.3)   and (3.1) we have
\begin{eqnarray*}
&&\sinh^{2}\left(\frac{2^{\alpha}\sqrt{C}}{r^{\frac{\alpha}{2}}}d(\tilde{p},x)\right)\cos\theta\\ 
&\le &
\cosh^{2}\left(\frac{2^{2+\alpha}\sqrt{C}}{r^{\frac{\alpha}{2}}}\epsilon \phi_{\frac{\alpha}{2}}(r)\right)
-\cosh\left(\frac{2^{2+\alpha}\sqrt{C}}{r^{\frac{\alpha}{2}}}\frac{3\epsilon}{2 }\phi_{\frac{\alpha}{2}}(r)\right)\\
&\le& \cosh^2 \left({2^{2+\alpha}\sqrt{C}}\epsilon \right)-\cosh\left({\frac{3}{2}2^{2+\alpha}\sqrt{C}}\epsilon\right)\\
&<& 0,
\end{eqnarray*}
so
$$\theta >\frac{\pi}{2},$$
which shows that $x$ is not a critical ponit of $d_p$ and Theorem 3.1 follows.
\end{proof}

Before proving Theorem 1.2, we need the following lemma, which can be considered as a
generization of Lemma 3.1 in \cite{13}.

\begin{lemma}
Let $M$ be an $n$-dimensional $(n\ge 3)$ complete noncompact Riemannian manifold with
$$
Ric_{(k)}(x)\ge -k\lambda(d_{p}(x)), \textrm{ for } 2\le k\le n-1,
$$
    and  $$ K(x)\ge -\frac{C}{d_{p}(x)^\alpha},$$
where $C(\lambda)=\int_{0}^{+\infty}s\lambda(s)ds < +\infty$ and $C>0$, $0\le \alpha\le 2$.
There exists a constant 
$\epsilon^\prime=\epsilon^\prime( k,\lambda, C, \alpha)>0$, such that  $M$ has finite topological type, provided that
\begin{equation}
\limsup_{r\to +\infty }\frac{H(p,r)}{r^{\frac{1}{k+1}(\frac{k\alpha}{2}+1)}}
\leq {\epsilon}.
\end{equation}
\end{lemma}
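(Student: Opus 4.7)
The plan is to follow the pattern of Theorem 3.1 with two modifications: invoke the Grove--Shiohama disk theorem so that it suffices to show $d_p$ has no critical points outside some large ball $B(p,r_0)$, and replace the trivial bound $e_{pq}(x)\le 2s$ used in Theorem 3.1 by the sharper Abresch--Gromoll--Shen type estimate from Lemma 2.3. The second change lets us scale the comparison triangle with $s$ rather than with $\phi_{\alpha/2}(r)$, which is exactly what the weaker hypothesis on $H(p,r)$ requires.

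Fix $x\in S(p,r)$ for $r$ large, choose a ray $\gamma$ from $p$ with $s:=d(x,\gamma)=d(x,R_p)\le H(p,r)\le \epsilon\,r^{(k\alpha/2+1)/(k+1)}$, and set $q=\gamma(2r)$. For a constant $A=A(k,\lambda)\ge 1$ to be determined, let $\sigma_1,\sigma_2$ be minimizing geodesics from $x$ to $p$ and $q$, and put $\tilde p=\sigma_1(As)$, $\tilde q=\sigma_2(As)$. A direct estimate shows that once $r$ is large and $\epsilon$ is small, the triangle $\triangle(x,\tilde p,\tilde q)$ lies in $M\setminus B(p,r/2)$, where $K\ge -2^\alpha C/r^\alpha$. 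Writing $\kappa_r:=2^{\alpha/2}\sqrt{C}\,r^{-\alpha/2}$ and $\theta:=\angle(\sigma_1'(0),\sigma_2'(0))$, Toponogov's comparison against the space form of constant curvature $-2^\alpha C/r^\alpha$ gives
\[
\cosh(\kappa_r d(\tilde p,\tilde q))\le \cosh^2(\kappa_r As)-\sinh^2(\kappa_r As)\cos\theta.
\]
Using the triangle inequality and Lemma 2.3, $d(\tilde p,\tilde q)\ge 2As-C_1(s^{k+1}/r)^{1/k}$ with $C_1=8(1+8C_0)^{(k+1)/(2k)}$. A short manipulation, combining the identity $\cosh^2 y=(1+\cosh 2y)/2$ with the mean-value estimate $\cosh(2y-w)\ge\cosh(2y)-w\sinh(2y)$, reduces the condition $\cos\theta<0$ to the algebraic inequality
\[
\tanh(\kappa_r As)>2\kappa_r C_1\left(\frac{s^{k+1}}{r}\right)^{1/k}.
\]

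The exponent $(k\alpha/2+1)/(k+1)$ in the hypothesis is chosen precisely so that substituting the bound on $s$ makes all powers of $r$ cancel on the right-hand side, reducing it to the $r$-independent quantity $2^{1+\alpha/2}\sqrt{C}\,C_1\,\epsilon^{(k+1)/k}$. The main obstacle is then the left-hand side: as $r$ varies, $\kappa_r As$ can be arbitrarily small or arbitrarily large, so neither the linearisation $\tanh y\sim y$ nor the saturation $\tanh y\to 1$ applies uniformly. I would handle this by splitting into two regimes. When $\kappa_r As\le 1$, use $\tanh y\ge y/2$ on $[0,1]$ to reduce the inequality to $A\ge 4C_1(s/r)^{1/k}$, which holds uniformly once $A\ge 4C_1$. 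When $\kappa_r As>1$, use $\tanh y>\tanh 1>3/4$ and choose $\epsilon$ small enough that the $r$-independent constant on the right is less than $3/4$. With $A=A(k,\lambda)$ and $\epsilon=\epsilon(k,\lambda,C,\alpha)$ fixed in this way, $\theta>\pi/2$ holds for every minimizing geodesic $\sigma_1$, so $x$ is not critical; the Grove--Shiohama isotopy lemma then yields finite topological type.
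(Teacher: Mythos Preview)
Your argument is correct and follows the same overall strategy as the paper: reduce to showing that $d_p$ has no critical points outside a large ball, invoke the excess estimate of Lemma~2.3, and apply Toponogov's theorem to a small hinge at $x$. The genuine difference is the scaling of the comparison triangle. You take side length $As$ (a multiple of the ray distance), which forces you into the case split on whether $\kappa_r As\le 1$ or $\kappa_r As>1$, since $\kappa_r s$ is not controlled uniformly in $r$. The paper instead takes side length $\delta r^{\alpha/2}$; then $\kappa_r\cdot \delta r^{\alpha/2}=2^{\alpha}\sqrt{C}\,\delta$ is a constant independent of $r$, so the Toponogov inequality reduces directly to the fixed numerical inequality $\cosh^2(2^{\alpha}\sqrt{C}\delta)-\cosh(\tfrac{3}{2}\cdot 2^{\alpha}\sqrt{C}\delta)<0$, with no regimes to separate. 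Your approach has the virtue of making the role of the excess estimate (and hence of the exponent $\tfrac{1}{k+1}(\tfrac{k\alpha}{2}+1)$) very transparent via the $\tanh$ reduction, whereas the paper's choice buys a cleaner one-line conclusion. One small point: for the triangle to sit in $M\setminus B(p,r/2)$ you need $As<r/4$; when $\alpha=2$ this is not automatic from ``$r$ large'' alone and requires the extra constraint $\epsilon<1/(4A)$, which you should state explicitly alongside your other smallness conditions on $\epsilon$.
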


\begin{proof}
Let $\delta$ be the solution of the inequality 
\begin{equation}
 \cosh^2 \left(2^{\alpha}\sqrt{C}\delta \right)-\cosh\left(\frac{3}{2}2^{\alpha}\sqrt{C}\delta\right)<0
\end{equation}
and take $\epsilon^\prime$ to be 
\begin{equation}
\epsilon^\prime= \left(\frac{\delta}{16(1+8C_0)^{\frac{1+k}{2k}}}\right)^{\frac{k}{k+1}}.
\end{equation}
 From (3.4), we can find a
constant $r_0 >1$ such that
\begin{equation}
{H(p,r)}
\leq {\epsilon^\prime}{r^{\frac{1}{k+1}(\frac{k\alpha}{2}+1)}}, \ \forall r \ge r_0.
\end{equation}

It suffices to show that $d_p$ has no critical point in $M\setminus B(p, r_0)$. To do this, take an arbitrary point $x \in M\setminus B(p, r_0)$ and let $r = d(p,x)$. 
 From
our condition, there exists a ray $\gamma$ issuing from $p$ such that $s = d(x,\gamma)$ and
\begin{equation}
s \le \epsilon^\prime {r^{\frac{1}{k+1}(\frac{k\alpha}{2}+1)}} <r.
\end{equation}
Let $q = \gamma (2r)$ and 
 $\sigma_{1}$ and $\sigma_{2}$ be geodesics joining $x$ to $p$ and $q$ respectively.
Set ${p}^\prime = \sigma_{1} (\delta r^{\frac{\alpha}{2}})$; ${q}^\prime = \sigma_{2} (\delta r^{\frac{\alpha}{2}})$. As in the proof of Theorem 3.1, we know that
the triangle $\Delta( x,{p}^\prime,{q}^\prime)\subset M\setminus B(p,\frac{r}{4})$
and 
\begin{equation}
 d({p}^\prime, {q}^\prime ) \ge 2 \delta r^{\frac{\alpha}{2}}-e_{pq}(x).
\end{equation}
Using (2.5), (3.6)   and (3.8) we have
$$
 \begin{array}{rl}e_{pq}(x)&\le  8 (1+8C_0)^{\frac{1}{2k}}\left( \frac{s^{k+1}}{r}\right)^{\frac{1}{k}}\\
&\le  8 (1+8C_0)^{\frac{1}{2k}}\left( \frac{{(\epsilon^\prime})^{k+1}r^{k\alpha/2+1}}{r}\right)^{\frac{1}{k}}\\
&=\frac{\delta}{2}r^{\frac{\alpha}{2}}.\end{array}
$$
So we have
\begin{equation}
 d({p}^\prime, {q}^\prime ) \ge 2 \delta r^{\frac{\alpha}{2}}-\frac{1}{2}\delta r^{\frac{\alpha}{2}}
=\frac{3}{2}\delta r^{\frac{\alpha}{2}}.
\end{equation}
Applying the Toponogov Theorem to the triangle $\Delta (x,p^\prime,q^\prime)$ we have
\begin{eqnarray*}
&&\sinh^{2}\left(\frac{2^{\alpha}\sqrt{C}}{r^{\frac{\alpha}{2}}}d ({p}^\prime,x)\right)\cos\theta\\ 
&\le &
\cosh^{2}\left(\frac{2^{\alpha}\sqrt{C}}{r^{\frac{\alpha}{2}}}d ( {p}^\prime,x)\right)
-\cosh\left(\frac{2^{\alpha}\sqrt{C}}{r^{\frac{\alpha}{2}}}
d(p^\prime,q^\prime)\right)\\
&\le& \cosh^2 \left(2^{\alpha}\sqrt{C}\delta \right)-\cosh\left(\frac{3}{2}2^{\alpha}\sqrt{C}\delta\right)\\
&<& 0,
\end{eqnarray*}
so
$$\theta >\frac{\pi}{2},$$
which shows that $x$ is not a critical ponit of $d_p$ and Lemma 3.2 follows.
\end{proof}

\noindent{\it Proof of Theorem 1.2.} Take the number $\epsilon$ in the Theorem 1.2 to be
\begin{equation}\epsilon=\frac{(\epsilon^\prime)^{n-1}}{ 18^{n}e^{3(n-1)C_{0}}},
\end{equation}
where $\epsilon^\prime=\epsilon^\prime(k,\lambda, C, \alpha)$ is as in Lemma 3.2.
 From (1.4), we can find a
constant $r_0 >1$ such that
\begin{equation}
\frac{vol B(x, r)}{vol B(\bar{x},r)}-\alpha_p
\leq {\epsilon}\alpha_pr^{-\frac{(n-1)k}{k+1}(1-\frac{\alpha}{2})}, \ \forall r \ge r_0.
\end{equation}
From Lemma 3.2,  it only need to show that for any arbitrary point $x \in M\setminus B(p, r_0)$ and  a ray $\gamma$ issuing from $p$,  set $r=d(p, x)$ and $s = d(x,\gamma)$,  then
\begin{equation}
s \le \epsilon^\prime {r^{\frac{1}{k+1}(\frac{k\alpha}{2}+1)}}.
\end{equation}
To prove it, let $\Sigma_{p}(\infty)$ be the set of unit vectors
$v\in S_pM$ such that the geodesic $\gamma(t)=exp_p(tv)$ is a ray and  $\Sigma_{p}^{c}(\infty)=S_{p}\setminus\Sigma_{p}(\infty)$. We have 
$$ B( x,s)\subset B_{\Sigma_{p}^{c}(\infty)}(p,r+s)\setminus B(p,r-s),$$
which means \begin{equation}
 volB( x,s)\le volB_{\Sigma_{p}^{c}(\infty)}(p,r+s)-volB(p,r-s).
 \end{equation}
By the Relative Comparison Theorem for asymptotically nonnegative Ricci curvature (see \cite{6}), we have
\begin{eqnarray}
 volB( x,\frac{s}{2})&\le  & volB_{\Sigma_{p}^{c}(\infty)}(p,r+\frac{s}{2})-volB_{\Sigma_{p}^{c}(\infty)}(p,r-\frac{s}{2})\nonumber\\
&= &volB_{\Sigma_{p}^{c}(\infty)}(p,r-\frac{s}{2})\left( \frac{volB_{\Sigma_{p}^{c}(\infty)}(p,r+\frac{s}{2})}{volB_{\Sigma_{p}^{c}(\infty)}(p,r-\frac{s}{2})} -1\right)\nonumber\\
&\le &volB_{\Sigma_{p}^{c}(\infty)}(p,r-\frac{s}{2})\left( \frac{volB(\bar{p},r+\frac{s}{2})}{volB(\bar{p},r-\frac{s}{2})} -1\right)\nonumber\\
&\le &e^{(n-1)C_{0}}\left(\left(\frac{r+\frac{s}{2}}{r-\frac{s}{2}}\right)^{n}-1\right)volB_{\Sigma_{p}^{c}(\infty)}(p,r-\frac{s}{2})\\
&\le &\left(3^{n}-1\right)e^{(n-1)C_{0}}\frac{s}{r}volB_{\Sigma_{p}^{c}(\infty)}(p,r-\frac{s}{2}),\end{eqnarray}
where the last two inequalities is in fact due to Mahaman by using the volume element estimate  $dv(t) \le e^{(n-1)C_{0}}t^{n-1}dt\wedge dS_{n-1}$ in polar
coordinates.

Now, from (3.14), Lemma  3.10 in \cite{6} and (3.12), we have

\begin{eqnarray}  volB_{\Sigma_{p}^{c}(\infty)}(p,r-\frac{s}{2})&=& volB(p,r-\frac{s}{2})-volB_{\Sigma_{p}(\infty)}(p,r-\frac{s}{2})\nonumber\\
&\le & volB(p,r-\frac{s}{2})-\alpha_{p}volB(\overline{p},r-\frac{s}{2})\nonumber\\
&\le& {\epsilon}\alpha_p r^{-\frac{(n-1)k}{k+1}(1-\frac{\alpha}{2})}volB(\bar{p},r-\frac{s}{2})\nonumber\\
&\le&{\epsilon}\omega_n\alpha_p e^{(n-1)C_{0}}r^{-\frac{(n-1)k}{k+1}(1-\frac{\alpha}{2})}r^n.
\end{eqnarray}
Subsituting (3.17) in (3.14), we get
\begin{equation}
 volB( x,\frac{s}{2})\le (3^n-1){\epsilon}\omega_n \alpha_pe^{2(n-1)C_0}sr^{n-1-\frac{(n-1)k}{k+1}(1-\frac{\alpha}{2})}.
\end{equation}

On the other hand, using (15) in \cite{6}, we know that
\begin{equation}
 volB( x,\frac{s}{2})\ge \frac{\omega_n}{3^n} \alpha_pe^{-(n-1)C_0} \left(\frac{s}{2}\right)^n.
\end{equation}
From these  two inequalities, we have
\begin{equation}
s^{n-1}\leq\epsilon 18^{n}e^{3(n-1)C_{0}} r^{\frac{n-1}{k+1}(\frac{k\alpha}{2}+1)},\end{equation}
using (3.12), we obtain
$$ s\le (\epsilon 18^{n}e^{3(n-1)C_{0}})^{\frac{1}{n-1}}r^{\frac{1}{k+1}(\frac{k\alpha}{2}+1)}=
\epsilon^\prime r^{\frac{1}{k+1}(\frac{k\alpha}{2}+1)},
$$
which satisfying (3.13) and completing the proof of Theorem 1.2. $\hspace{22mm} \square$

\vspace{2mm}

\noindent{\it Proof of Theorem 1.5.}  We choose the number $\epsilon$ in the Theorem 1.5 to be
\begin{equation}\epsilon=\frac{(\epsilon^\prime)^{n-1}}{ 18^{n}e^{3(n-1)C_{0}}},
\end{equation}
where $\epsilon^\prime=\epsilon^\prime(k,\lambda, C, \alpha)$ is as in Lemma 3.2. Take any arbitrary point $x \in M\setminus\{p\}$ and  a ray $\gamma$ issuing from $p$, and set $r=d(p, x)$ and $s = d(x,\gamma)$. Using the similar methods as in the proof of Theorem 1.2 and our condition,
\begin{equation}
 volB( x,\frac{s}{2})\le (3^n-1){\epsilon}\omega_n \alpha_pe^{2(n-1)C_0}s\left(\phi_{\frac{1}{k+1}(\frac{k\alpha}{2}+1)}(r)\right)^{n-1}.
\end{equation}
From (3.19) and (3.22), we obtain
\begin{equation}
s \le \epsilon^\prime \phi_{\frac{1}{k+1}(\frac{k\alpha}{2}+1)}(r).
\end{equation}
 Repeatting the argument as in the proof of Theorem 3.1 and Lemma 3.2, we
can show that $x$ is not a critical ponit of $d_p$ and Theorem 1.5 follows.

\noindent { \small {\bf Acknowledgements}  The author would like to thank referees for their valuable suggestions and remarks.}

\bibliographystyle{amsplain}

\end{document}